\numberwithin{equation}{section} \theoremstyle{plain}
\newtheorem{theorem}{Theorem}[section]
\newtheorem{corollary}[theorem]{Corollary}
\theoremstyle{definition}
\newtheorem{definition}{Definition}[section]
\newtheorem{example}{Example}[section]
\theoremstyle{remark}
\newtheorem{remark}{\rm\bf Remark}[section]
\begin{document}

\title{Discrete Stieltjes classes for log-Heine type distributions}
\author{Sofiya Ostrovska and Mehmet Turan}
\date{}
\maketitle

\begin{center}
{\it Atilim University, Department of Mathematics,  Incek  06836, Ankara, Turkey}\\
{\it e-mail: sofia.ostrovska@atilim.edu.tr, mehmet.turan@atilim.edu.tr}\\
{\it Tel: +90 312 586 8211,  Fax: +90 312 586 8091}
\end{center}

\begin{abstract}
The Stieltjes classes play a significant  role  in the moment problem since they permit to expose explicitly an infinite family of probability distributions all having equal moments of all orders.  Mostly, the Stieltjes classes have been considered for absolutely continuous distributions. In this work, they have been considered for discrete distributions. New results on their existence in the discrete case are presented. Examples for some widely used discrete distributions are provided.

\end{abstract}

{\bf Keywords}: Stieltjes class,  discrete distribution, moments,  analytic function

{\bf 2010 MSC:}  60E05, 44A60, 30E99, 

\section{Introduction }

 Stieltjes classes actually appeared in \cite{stieltjes}, while the name may be viewed as present-day.
For good reasons, J. Stoyanov \cite{jap} suggested to use the name `Stieltjes classes'
and  triggered their systematic study, which is still in progress. See, for example  \cite{ recent, pakes, counter}.
Mostly, Stieltjes classes have been considered for absolutely continuous distributions as classes of different probability densities with the same sequence of moments.  However, they may also be used to construct  sets  of discrete distributions with the same sequence of moments.

For the sequel, we need the following definitions.

\begin{definition} Let $X$ be a random variable possessing a discrete distribution with probability mass function $p_X=p$
given by $p(x_j)=p_j,\; j\in \mathbb{N}_0.$ A sequence $h=\{h_j\}_{j\in \mathbb{N}_0}$ is a \textit{perturbation} for $p$ if
$M_h:=\displaystyle\sup_j |h_j|=1$ and
\begin{align*}
\sum_{j=0}^\infty x_j^k p_jh_j=0 \quad  \text{for all } k\in{\mathbb N}_0.
\end{align*}
\end{definition}

It has to be noticed that not all probability mass functions own perturbations. Clearly, the existence of perturbation is stipulated by the moment indeterminacy of the underlying discrete distribution. In this connection, the next definition can be formulated.
\begin{definition} Given a probability mass function $p$ and its perturbation $h,$
the set
\begin{align*}
{\bf S}:=\{g:  g = p(1+\varepsilon h), \ \varepsilon \in[-1, 1]\}
\end{align*}
is called a (discrete) {\it Stieltjes class} for $p$ generated by $h.$
\end{definition}

Examples of discrete Stieltjes classes are provided in \cite{berg} and \cite[Section 11]{counter}. It has to be pointed out that, since \cite{berg} had appeared before \cite{jap} was published, the name `Stieltjes class' had not been used there. However, the results of \cite{berg} can be easily restated in terms of the Stieltjes classes. It is worth  mentioning  that C. Berg characterizes discrete moment-indeterminate distributions possessing Stieltjes classes as the ones which are not extreme points in the set of distributions with the same moment sequences. See \cite[Proposition 1.1]{berg}. 

In the present paper,  the existence of Stieltjes classes related to certain families of discrete distributions is investigated. More precisely, given a non-negative integer-valued random variable $X,$ this study aims to examine the presence of Stieltjes classes for the probability mass function of $Y=a^X,$ $a>0,$ $a\neq 1.$ Obviously, when $a\in (0,1),$ there are no Stieltjes classes for $Y$ because  the distribution of $Y$ is moment-determinate as it has a bounded support. Therefore, only the case $a>1$ will be considered.  Denote by 
\begin{equation*}p_j=\mathbf{P}\{X=j\},\quad j\in\mathbb{N}_0.\end{equation*} Correspondingly, the probability generating function of $X$ can be written as:
\begin{equation*}\label{entire} f(z)=\sum_{j=0}^\infty p_jz^j,\quad z\in\mathbb{C}.\end{equation*} 
Clearly, $Y$ has finite moments of  all orders if and only if $f(z)$ is entire. Some conditions in terms of coefficients and growth estimates of $f(z)$ for the Stieltjes classes of $P_Y$ to exist are established. It will be proved that,
under the condition
\begin{align*}\label{cond1}
p_j\geqslant C a^{-j(j-1)/2} \quad\mathrm{for \;all\;}j\in \mathbb{N}_0\quad \mathrm{and\;some\; } C>0,
\end{align*}
the probability mass function  of $Y$ has a perturbation and, as a result, the distribution  is moment-indeterminate. On the other hand, if
$$p_j=o\left(a^{-j(j-1)/2}\right) \quad \text{as } \ j\to \infty,$$
 no perturbation function exists.  
The application of these results to  the case when  $X$ has a log-concave  distribution is provided.

Finally, as a model example, a random variable $X$ possessing Heine distribution is considered. It is one of the important $q$-distributions, see \cite[Section 2.3]{char}.
Its probability mass function  involves the following $q$-analogue of the exponential function:
\begin{align*}
e_q(t)=\prod_{j=0}^\infty \left(1-t(1-q)q^j\right)^{-1},\quad 0<q<1, \quad t<1/(1-q).
\end{align*}
It is said that a random variable $X$ has Heine distribution with parameter $\lambda >0$ - and is written $X\sim Heine(\lambda)$ - if its probability mass function is given 
by:
\begin{equation}\label{heine} 
p_j=e_q(-\lambda)\frac{q^{j(j-1)/2}\lambda^j}{[j]_q!},\quad j\in\mathbb{N}_0,\; \lambda >0,\;  0<q<1.
\end{equation}
Here, $[j]_q!$ is the $q$-factorial  defined as:
$$[0]_q!:=1,\;[j]_q!:= \frac{(q;q)_j}{(1-q)^j}\quad \text{with}\quad (q;q)_j=\displaystyle\prod_{s=1}^j (1-q^s).$$ 
Conventionally, 
$$(q;q)_0=1 \quad \text{and} \quad (q;q)_\infty=\prod_{s=1}^\infty (1-q^s).$$  This distribution is viewed as a $q$-analogue of the  Poisson one since when $q\rightarrow 1^-,$  the Poisson distribution with parameter $\lambda$ is recovered. If $X\sim Heine(\lambda),$ the distribution of $Y=a^X, a>0, a\neq 1$ is called a \textit{log-Heine} distribution.
It is not difficult to see that the Heine distribution possesses finite moments of all orders, and, moreover, it is moment-determinate as its moment-generating function exists for all real numbers. The same is true for log-Heine distribution with $a\in(0, 1)$ since - as mentioned previously - it has a bounded support. The investigation of moment-(in)determinacy of the log-Heine distribution in the case $a>1$ is not that simple. In Example \ref{logh}, it is shown that a Stieltjes class for $Y$ exists if and only if $a>1/q$ or $a=1/q$ and $\lambda(1-q) \geqslant 1.$

Last but not least, it has to be acknowledged that this study is motivated by Examples 11.7 and 11.8 of \cite{counter}.

\section{Results and examples}

In the sequel, the letter $C$ - with or without indexes - denotes a positive constant whose value is of no concern. Note that the same letter may be assigned to denote constants with different numerical values.

\begin{theorem}\label{th1}
Let a random variable $X$ have  probability mass function $p_X(j)=p_j,$ $j\in\mathbb{N}_0,$ and $Y=a^X,\; a>1.$ If
\begin{equation}\label{W}
p_j\geqslant C a^{-j(j-1)/2} \quad {for \;all}\;j\geqslant 0,
\end{equation}
then a perturbation of $p_Y$ exists and, therefore, the distribution of $Y$ is moment-indeterminate, provided that $Y$ has the moments of all orders.
\end{theorem}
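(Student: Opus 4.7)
The plan is to exhibit a perturbation $h$ by constructing an entire generating function $g(z)=\sum_{n=0}^\infty c_n z^n$ that vanishes at every support point $a^k$ of $Y$ and whose Taylor coefficients decay at the rate $a^{-n(n-1)/2}$. Once such $g$ is in hand, setting $h_n:=c_n/(Mp_n)$ with $M:=\sup_n|c_n|/p_n$ will automatically produce a sequence with $\sup_n|h_n|=1$ and
$$\sum_{n=0}^\infty a^{nk}p_n h_n \;=\; \frac{1}{M}\sum_{n=0}^\infty c_n a^{nk} \;=\; \frac{g(a^k)}{M} \;=\; 0,\qquad k\in\mathbb{N}_0,$$
which is precisely the perturbation condition for $p_Y$.

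For the construction of $g$, I would take $q:=1/a\in(0,1)$ and use the $q$-Pochhammer infinite product
$$g(z) := (z;q)_\infty \;=\; \prod_{k=0}^\infty\bigl(1-z q^k\bigr) \;=\; \prod_{k=0}^\infty\bigl(1-z/a^k\bigr).$$
Because $\sum_{k\geqslant 0} a^{-k}<\infty$, the product converges absolutely on every compact set, so $g$ is entire, and the factor with index $k$ forces $g(a^k)=0$ for every $k\in\mathbb{N}_0$. Euler's identity supplies the explicit Taylor expansion
$$g(z) \;=\; \sum_{n=0}^\infty \frac{(-1)^n q^{n(n-1)/2}}{(q;q)_n}\,z^n \;=\; \sum_{n=0}^\infty \frac{(-1)^n a^{-n(n-1)/2}}{(q;q)_n}\,z^n,$$
so $c_n=(-1)^n a^{-n(n-1)/2}/(q;q)_n$.

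To finish, since $(q;q)_n$ decreases to the positive limit $(q;q)_\infty$, one has $|c_n|\leqslant a^{-n(n-1)/2}/(q;q)_\infty$; combined with hypothesis (\ref{W}) this yields
$$\frac{|c_n|}{p_n} \;\leqslant\; \frac{1}{C\,(q;q)_\infty} \;<\; \infty,$$
so $M\in(0,\infty)$ (note $c_0=1$, hence $M>0$). Normalizing $h_n:=c_n/(M p_n)$ gives a real sequence with $\sup_n|h_n|=1$; the termwise interchange used above is legitimate because $\sum_n c_n z^n$ converges absolutely at every $z=a^k$ (as $g$ is entire). The moment-indeterminacy conclusion is then immediate: for any $\varepsilon\in(0,1]$, the function $p_Y(1+\varepsilon h)$ is a bona fide probability mass function (nonnegativity follows from $|\varepsilon h_j|\leqslant 1$, total mass $1$ from the $k=0$ moment identity) and shares all moments with $p_Y$.

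The main obstacle is really only the first step, namely recognizing that an entire function with the prescribed zero set $\{a^k\}$ and coefficient decay $a^{-n(n-1)/2}$ is exactly what the $q$-exponential product $(z;q)_\infty$ with $q=1/a$ delivers through Euler's identity; once this candidate is identified, the estimates, the normalization, and the verification of the perturbation identities are routine. The hypothesis (\ref{W}) is, in fact, calibrated to precisely this construction, which suggests that the complementary decay condition $p_j=o(a^{-j(j-1)/2})$ announced in the introduction should rule out any such $g$.
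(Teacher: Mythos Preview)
Your proof is correct and follows essentially the same route as the paper: both construct the entire function $\varphi(z)=\prod_{s\geqslant 0}(1-z/a^s)=(z;1/a)_\infty$, invoke Euler's identity to read off its Taylor coefficients $c_n=(-1)^n a^{-n(n-1)/2}/(1/a;1/a)_n$, and then use hypothesis~\eqref{W} to verify that $c_n/p_n$ is bounded before normalizing. Your write-up adds the explicit check that $p_Y(1+\varepsilon h)$ is a genuine probability mass function, but the core construction is identical.
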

\begin{proof}
To establish this result, it suffices to find a bounded non-zero sequence $\tilde{h}=\{\tilde{h}_j\}_{j\geqslant 0}$ satisfying
\begin{equation}\label{ph}\sum_{ j\geqslant 0} a^{kj}p_j\tilde{h}_j=0\;\;\mathrm{for\quad all\quad}k\in\mathbb{N}_0.
\end{equation}
Recall the following identity  established by Euler - see, for example, \cite[formula (1.23)]{char}:
\begin{align}\label{eul}
 \prod_{j=0}^\infty (1+q^j t) =
\sum_{j=0}^\infty \frac{q^{j(j-1)/2}}{(q;q)_j}\, t^j,\quad 0<q<1,\quad t\in \mathbb{C}.
\end{align}
Set 
\begin{align}\label{h1}
\tilde{h}_j =
 \frac{(-1)^ja^{-j(j-1)/2}}{(1/a;1/a)_j p_j}, \quad j \in \mathbb{N}_0.
\end{align}
Obviously, $\tilde{h}\neq 0,$ and owing to  \eqref{W},
\begin{equation*}
|\tilde{h}_j|\leqslant\frac{a^{-j(j-1)/2}}{(1/a;1/a)_j Ca^{-j(j-1)/2}}\leqslant \frac{1}{C(1/a;1/a)_\infty}=:C_1.
\end{equation*}
The validity of \eqref{ph} follows immediately from \eqref{eul} because for 
$$\varphi(t):=\prod_{s=0}^\infty \left(1-\frac{t}{a^s}\right)$$
one has 
$$0=\varphi(a^k)= \sum_{j=0}^\infty a^{kj} \frac{(-1)^ja^{-j(j-1)/2}}{(1/a;1/a)_j}=\sum_{j=0}^\infty a^{kj}p_j\tilde{h}_j \quad \text{for all} \quad k\in{\mathbb N}_0.$$
Taking $h:=\tilde{h}/M_{\tilde{h}}, $ where $M_{\tilde{h}}=\sup_j|h_j|$, one obtains a perturbation $h$ for $p_Y$ and in this way completes the proof.
\end{proof}

\begin{corollary}
Let $X$ and $Y$ be as in Theorem \ref{th1}. Then, the set 
\begin{align*}
{\bf S}=\left\{g=\{g_j\}: g_j =p_j(1 +\varepsilon h_j), \varepsilon\in[-1, 1]\right\},
\end{align*} where $h$ is constructed by means of \eqref{h1}, forms  a Stieltjes class for $p_Y.$
\end{corollary}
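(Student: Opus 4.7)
The plan is to verify directly that the set ${\bf S}$ satisfies the three requirements coming from Definitions 1.1 and 1.2: namely, that $h$ (as produced by Theorem \ref{th1}) is a perturbation of $p_Y$, that each $g \in {\bf S}$ is a legitimate probability mass function, and that each such $g$ carries the same moment sequence as $p_Y$. Once these are in place, ${\bf S}$ matches the definition of a Stieltjes class verbatim.

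First, I would invoke Theorem \ref{th1} to obtain the normalized sequence $h = \tilde{h}/M_{\tilde{h}}$, so that $M_h = 1$ and $\sum_{j\geqslant 0} a^{kj} p_j h_j = 0$ for every $k \in \mathbb{N}_0$. Since $Y = a^X$ takes the value $a^j$ with probability $p_j$, the orthogonality relation $\sum_j (a^j)^k p_j h_j = 0$ is exactly the definition of $h$ being a perturbation of $p_Y$, provided the indexing $\{h_j\}$ is read as a sequence attached to the support point $a^j$. A brief sentence clarifying this identification removes any notational ambiguity between $p_X$ and $p_Y$.

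Next, I would verify that each $g_j = p_j(1 + \varepsilon h_j)$ with $\varepsilon \in [-1,1]$ is non-negative: since $|h_j| \leqslant M_h = 1$, one has $1 + \varepsilon h_j \geqslant 1 - |\varepsilon||h_j| \geqslant 0$, so $g_j \geqslant 0$. Setting $k = 0$ in the orthogonality relation yields $\sum_j p_j h_j = 0$, which gives $\sum_j g_j = \sum_j p_j + \varepsilon \sum_j p_j h_j = 1$. Finally, for every $k \in \mathbb{N}_0$,
\begin{equation*}
\sum_{j\geqslant 0} (a^j)^k g_j = \sum_{j\geqslant 0} a^{kj} p_j + \varepsilon \sum_{j\geqslant 0} a^{kj} p_j h_j = \sum_{j\geqslant 0} a^{kj} p_j,
\end{equation*}
so the $k$-th moment of every $g \in {\bf S}$ coincides with that of $p_Y$.

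Honestly, no step here looks like a genuine obstacle: the entire content of the corollary is essentially a translation of Theorem \ref{th1} through Definition 1.2. The only point that deserves a line of care is the notational one noted above, and possibly an explicit mention that finiteness of all moments of $g \in {\bf S}$ is inherited from $p_Y$ (guaranteed by the hypothesis of Theorem \ref{th1} that $Y$ possesses moments of all orders), so the sums rearranged above are absolutely convergent and the interchange is legitimate.
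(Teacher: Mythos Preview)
Your proposal is correct and follows the same route as the paper, which in fact offers no separate proof: by Definition~1.2, once Theorem~\ref{th1} supplies a perturbation $h$ of $p_Y$, the set ${\bf S}$ is \emph{by definition} a Stieltjes class, so the corollary is immediate. Your additional checks that each $g\in{\bf S}$ is a genuine probability mass function sharing the moments of $p_Y$ are sound and worth recording, but they go beyond what the corollary formally requires.
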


\begin{example} (Log-Poisson distribution) Let $X$ have Poisson distribution with parameter $\lambda$.
Then, the probabilities $p_j= {\lambda^j e^{-\lambda}}/{j!},$ $j\in\mathbb{N}_0$ satisfy condition \eqref{W} for every $a>1.$ Indeed, with the help of Stirling's formula, it can be observed that
\begin{equation*}  p_j\sim C\exp\{j\ln\lambda-(j+1/2)\ln j+j\} \geqslant C_1\exp\left\{-\frac{j(j-1)}{2}\ln a\right\}.
\end{equation*}
Hence, by Theorem \ref{th1}, the distribution of
  $Y=a^X,$ $a>1,$  is moment-indeterminate and a Stieltjes class for $p_Y$ can be written in the form:
\begin{align*}
{\bf S}:=\{g:  g = p_Y(1+\varepsilon h), \ \varepsilon \in[-1, 1]\},
\end{align*} where $ h=\tilde{h}/M_{\tilde{h}}$ and $\displaystyle \tilde{h}_j=\frac{(-1)^j e^{\lambda}j!\,a^{-j(j-1)/2}}{\lambda^j (1/a;1/a)_j},\quad j\in\mathbb{N}_0.$
 \end{example}
The sharpness of Theorem \ref{th1} can be demonstrated by the next result. The  proof below is similar to the one presented in 
Lemma 2.6 of \cite{qarxiv}.

\begin{theorem}\label{th2}
Let $X$ and  $Y$ be as in Theorem \ref{th1}. If
\begin{align*}
p_j = o(a^{-j(j-1)/2}) \quad \text{as} \quad j \to +\infty,
\end{align*}
then there are no perturbation functions for $p_Y.$
\end{theorem}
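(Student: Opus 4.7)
The plan is to argue by contradiction. Suppose a perturbation $h=\{h_j\}$ of $p_Y$ exists, and introduce
\[ F(z):=\sum_{j=0}^\infty p_j h_j z^j.\]
Since $|h_j|\le 1$ and $p_j=o(a^{-j(j-1)/2})$, the series converges for every $z\in\mathbb{C}$, so $F$ is entire, and the moment conditions $\sum_j a^{kj}p_j h_j=0$ translate into $F(a^k)=0$ for all $k\in\mathbb{N}_0$. The idea is to show that the sequence of forced zeros at $1,a,a^2,\dots$ is asymptotically too dense to be compatible with the growth dictated by the hypothesis, unless $F\equiv 0$.

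To quantify the growth, I would compare $F$ with the auxiliary entire function
\[ \Phi(r):=\prod_{s=0}^\infty\bigl(1+a^{-s}r\bigr),\]
using Euler's identity \eqref{eul} with $q=1/a$ together with $(1/a;1/a)_j\le 1$ to obtain $\sum_{j\ge 0}a^{-j(j-1)/2}r^j\le\Phi(r)$. The little-$o$ assumption then upgrades to: for every $\varepsilon>0$ there exists $R_\varepsilon>0$ such that $M_F(r):=\max_{|z|=r}|F(z)|\le\varepsilon\,\Phi(r)$ for all $r\ge R_\varepsilon$. A direct regrouping of the product at $r=a^N$ yields $\log\Phi(a^N)=\tfrac12 N(N+1)\log a+O(1)$ as $N\to\infty$.

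For the contradiction, assume $F\not\equiv 0$ and let $j_0$ be the smallest index with $p_{j_0}h_{j_0}\ne 0$; set $G(z):=F(z)/z^{j_0}$, which is entire, satisfies $G(0)\ne 0$, and still vanishes at every $a^k$. Jensen's formula applied to $G$ on a circle of radius slightly exceeding $a^N$, keeping only the zeros $1,a,\dots,a^N$, gives
\[ \log|G(0)|+\tfrac12 N(N+1)\log a \le \log M_G(a^N)=\log M_F(a^N)-j_0 N\log a.\]
Plugging in the bound $M_F(a^N)\le\varepsilon\Phi(a^N)$ and the estimate for $\log\Phi(a^N)$ makes the $N^2$ terms cancel, leaving $\log|G(0)|\le\log\varepsilon-j_0N\log a+O(1)$. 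Letting $\varepsilon\to 0$ (if $j_0=0$) or $N\to\infty$ with $\varepsilon$ fixed (if $j_0\ge 1$) forces $G(0)=0$, a contradiction. Hence $F\equiv 0$, so $p_jh_j=0$ for every $j$, which rules out a non-trivial perturbation of $p_Y$.

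The delicate point I anticipate is that both sides of the Jensen comparison share the same leading term $\tfrac12 N(N+1)\log a$, so the little-$o$ hypothesis is doing all of the work: it is precisely the $\log\varepsilon$ gap (reinforced by the $-j_0N\log a$ shift from the vanishing of $G$ at the origin) that closes the inequality. This also makes transparent why the weaker condition $p_j=O(a^{-j(j-1)/2})$ appearing in Theorem~\ref{th1} cannot be improved by this method.
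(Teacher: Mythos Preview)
Your proof is correct and follows essentially the same approach as the paper: form the entire function $F(z)=\sum_j p_jh_jz^j$, bound $M_F(r)$ from above by $o(\Phi(r))$ using the hypothesis, bound it from below via Jensen's formula exploiting the forced zeros at the powers of $a$, and derive a contradiction. Your treatment is in fact slightly more careful than the paper's, which uses the explicit comparison function $\exp\{\ln^2 r/(2\ln a)+\ln r/2\}$ in place of your product $\Phi(r)$ and does not explicitly address the possibility $F(0)=0$ that you handle by passing to $G=F/z^{j_0}$.
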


\begin{proof} To begin with, recall the standard notation $M(r;f):= \max_{|z|=r} |f(z)|$ where $f(z)$ is analytic in $\{z: |z| \leqslant r\}.$ 
Assume that $0\neq h=\{h_j\}_{j\geqslant 0}$ is a bounded sequence such that
\begin{equation*}
\sum_{j=0}^\infty a^{kj}p_jh_j=0\quad\mathrm{for\;\;all}\quad k\in \mathbb{N}_0.
\end{equation*}
Set $c_j:=p_jh_j.$  Obviously, the coefficients $c_j$ satisfy $c_j=o\left(a^{-j(j-1)/2}\right)$ as $j\rightarrow +\infty$,   implying  that the entire function
 $\phi(z)=\sum_{j=0}^{\infty} c_j z^j $  enjoys the estimate:
\begin{equation}\label{grow}M(r;\phi)\leqslant \sum_{j=0}^\infty|c_j|r^j =o\left(\exp\left\{\frac{\ln^2 r}{2\ln a}+\frac{\ln r}{2}\right\}\right) \quad \text{as} \quad r\to\infty\end{equation} because 
$$\sum_{j= 0}^\infty a^{-j(j-1)/2}r^j\leqslant C \exp\left\{\frac{\ln^2 r}{2\ln a}+\frac{\ln r}{2}\right\},\quad C=C(a)\;\text{and} \;r\geqslant r_0.$$
On the other hand, since $\phi(a^k)=0$ for all $k\in \mathbb{N}_0,$ Jensen's Theorem \cite[\S 3.61]{titch} implies that, for $k\in \mathbb{N}_0,$ the inequality below is valid:
\begin{align*}
M(r;\phi) \geqslant C \exp\left\{\frac{\ln^2 r}{2\ln a}+\frac{\ln r}{2}\right\}\quad\text{when}\quad r=a^k,
\end{align*}
which, however,  contradicts  \eqref{grow}. The proof is complete.
\end{proof}
 The  next  statement deals with discrete log-concave distributions. It is known that a discrete distribution is \textit{log-concave} if
\begin{equation}\label{logconc}
p_j^2\geqslant p_{j-1}p_{j+1},\quad j\in \mathbb{N}.
\end{equation} 
Such distributions have been studied by many authors from different angles and have shown to be of interest for applications. See, for example, \cite{logconcave} and \cite{sw}.

 \medskip
 \begin{theorem}Let $X$ have a log-concave  distribution, whose probability generating function $f(z)$ is entire and satisfies:
 \begin{align}\label{entpgf}0<\lim_{r\rightarrow\infty}\frac{\ln f(r)}{\ln^2 r}=:\beta <\infty.\end{align}
Then, for $a>\exp\{1/(2\beta)\}$ the distribution of $Y=a^X$ has Stieltjes classes, while for $a<\exp\{1/(2\beta)\}$ there are no Stieltjes classes. 
In the case $a=\exp\{1/(2\beta)\},$ additional information is needed.
\end{theorem}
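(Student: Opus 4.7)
The plan is to convert the growth hypothesis on $f$ into sharp asymptotics for $\psi(j):=-\ln p_j$, namely $\psi(j)\sim j^2/(4\beta)$, and then compare with $a^{-j(j-1)/2}=\exp(-j(j-1)\ln a/2)$ to invoke Theorem \ref{th1} or Theorem \ref{th2}. Writing $F(s):=\ln f(e^s)$, the hypothesis reads $F(s)/s^2\to\beta$, and log-concavity of $\{p_j\}$ is equivalent to convexity of $\psi$. Since log-concavity makes the support an interval, and a finite support would make $f$ polynomial (contradicting $\beta>0$), one may assume after a harmless shift that $p_j>0$ for all $j\in\mathbb{N}_0$. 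Cauchy's inequality $p_j\leqslant f(r)/r^j$, optimized in $r$, gives $\psi(j)\geqslant F^*(j):=\sup_s(js-F(s))$; combined with the upper estimate $F(s)\leqslant(\beta+\eta)s^2$ that holds for every $\eta>0$ and all sufficiently large $s$, this yields $p_j\leqslant\exp(-j^2/(4(\beta+\eta)))$ for $j$ large.

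The easy case $a<\exp\{1/(2\beta)\}$ follows immediately: pick $\eta>0$ so small that $\ln a<1/(2(\beta+\eta))$, and the Cauchy upper bound gives $p_j/a^{-j(j-1)/2}\to 0$, so Theorem \ref{th2} rules out perturbations and hence Stieltjes classes. The harder case $a>\exp\{1/(2\beta)\}$ requires a matching lower bound $p_j\geqslant C_\eta\,j^{-C}\exp(-j^2/(4(\beta-\eta)))$ for every small $\eta>0$. For this, I would show that the maximum term $M^*(s):=\max_j\exp(js-\psi(j))=\exp(\psi^*(s))$ of the series for $f(e^s)$ is comparable to $f(e^s)$ up to a polynomial factor in $s$. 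Indeed, the Cauchy upper bound forces $p_j e^{js}\leqslant 1$ for $j>4(\beta+\eta)s$ (with a summable Gaussian tail beyond), so $f(e^s)\leqslant Cs\cdot M^*(s)$ for $s$ large. Combined with $F(s)\geqslant(\beta-\eta)s^2$, this gives $\psi^*(s)\geqslant(\beta-\eta)s^2-O(\ln s)$. Convexity of $\psi$ (from log-concavity) yields $\psi^{**}=\psi$ by standard Legendre biduality, so $\psi(j)=\sup_s(js-\psi^*(s))\leqslant j^2/(4(\beta-\eta))+O(\ln j)$, which is the claimed lower bound on $p_j$.

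Choosing $\eta$ so small that $\ln a>1/(2(\beta-\eta))$ then forces $p_j/a^{-j(j-1)/2}\to\infty$, so $p_j\geqslant Ca^{-j(j-1)/2}$ for all $j\geqslant 0$ after adjusting $C$ for the finitely many small indices. Theorem \ref{th1} then supplies a perturbation and hence a Stieltjes class. The principal obstacle is the lower bound argument: extracting individual coefficient information from the aggregate growth of $f(r)$ crucially uses log-concavity, through control of the gap between the series $f(e^s)$ and its maximal term $M^*(s)$. This control degenerates precisely at $a=\exp\{1/(2\beta)\}$, where both sides of the comparison between $p_j$ and $a^{-j(j-1)/2}$ agree to leading order in the exponent, and the outcome depends on sub-leading corrections that the hypothesis $F(s)/s^2\to\beta$ alone does not determine.
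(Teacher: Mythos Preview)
Your proof is correct and shares the paper's overall architecture: Cauchy estimates give the upper bound $\limsup_j (\ln p_j)/j^2\leqslant -1/(4\beta)$, which via Theorem~\ref{th2} disposes of the case $a<\exp\{1/(2\beta)\}$; a matching lower bound under log-concavity then feeds Theorem~\ref{th1} for $a>\exp\{1/(2\beta)\}$. The only substantive difference is in how the lower bound is obtained. The paper argues by contradiction along a hypothetical bad subsequence $k\in K$ with $\ln p_k\leqslant -k^2/(4\gamma)$, $\gamma<\beta$, citing two results from P\'olya--Szeg\H{o}: log-concavity makes every term $p_kr^k$ the maximum term $\mu(r_k)$ at some $r_k$, and $M(r_k;f)/\mu(r_k)\to 1$; combining these forces $\ln M(r_k;f)\leqslant\gamma\ln^2(\delta r_k)$, contradicting the growth hypothesis. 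You instead establish $f(e^s)\leqslant Cs\,M^*(s)$ directly from the Gaussian tail already supplied by the Cauchy bound, and then invert via Legendre biduality $\psi^{**}=\psi$, with log-concavity entering precisely as convexity of $\psi$. Your route is self-contained (no external references needed) and even yields the quantitative refinement $\psi(j)\leqslant j^2/(4(\beta-\eta))+O(\ln j)$; the paper's is shorter once the P\'olya--Szeg\H{o} facts are granted, and its contradiction framing makes visible that it is the \emph{limit} (not merely $\limsup$) in the hypothesis that drives the lower bound.
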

\begin{proof} 
Since $f$ has positive coefficients, one has $M(r; f)=f(r)$ and the Cauchy estimates for the coefficients of $f$ imply that 
$p_j\leqslant f(r)r^{-j}.$ By virtue of \eqref{entpgf}, one has
$$\forall \varepsilon>0, \quad \ln p_j\leqslant \ln f(r)-j\ln r\leqslant (\beta+\varepsilon)\ln^2r-j\ln r, \quad r\geqslant r_0=r_0(\varepsilon).$$   
Taking the minimum with respect to $r,$ one obtains 
$\ln p_j\leqslant -\frac{j^2}{4(\beta+ \varepsilon)}$ for  $j\geqslant j_0(\varepsilon).$   Hence, 
\begin{align}\label{psup}\limsup_{j\rightarrow\infty}\frac{\ln p_j}{j^2}\leqslant -\frac{1}{4\beta}.\end{align} 
Notice that \eqref{psup} does not need the log-concavity of the sequence $\{p_j\}$ and also is valid for the upper limit rather than limit in the definition of $\beta.$  The latter estimate implies that if  $a<\exp\{1/(2\beta)\}$, then the upper estimate for $p_j$ gives  $p_j=o\left(a^{-j(j-1)/2}\right),\;j\rightarrow \infty,$ and Theorem \ref{th2} is applicable. 

It is commonly known that, in general, it is impossible to obtain lower estimates for the coefficients of an entire function from the function's growth estimate. Below, such estimates will be derived from 
 the  result  by V. Boicuk and A. Eremenko on the Dirichlet series (\cite[Theorem 3]{boer}) under the additional condition that $\{p_j\}$ is log-concave. Below, we present a version of their proof showing that if the probability generating function $f(z)=\sum_{j=0}^\infty p_jz^j$ satisfies the conditions of the theorem, then 
$\ln p_j\geqslant -j^2/(4\beta) + \alpha_j,$ where $\alpha_j=o(j^2)$ as $j\rightarrow\infty.$

Assume that, there exists a subsequence $\{p_k\},\;k\in K\subset {\mathbb N}_0$ such that 
$$\ln p_k\leqslant -\frac{k^2}{4\gamma},\quad 0<\gamma <\beta,\;k\in K.$$
It follows from \eqref{logconc} that each term $p_kr^k$ is the maximum term for some $r=r_k,$ see, for example, \cite[Part IV, Ch. 5, problem 43]{polya}. That is, $\mu(r_k)=\max_j p_jr_k^j=p_kr_k^k.$ Then, for every $\delta>1$ and $k$ large enough, $M(r_k;f)\leqslant p_k (\delta r_k)^k$
as $M(r_k)/\mu (r_k)\rightarrow 1,$ $k\rightarrow\infty.$ We refer to \cite[Part IV, Ch. 5, problem 54]{polya}. For $k\in K,$ this implies that 
\begin{align*}
\ln M(r_k;f) &\leqslant  \ln p_k+k\ln (\delta r_k)\leqslant -\frac{k^2}{4\gamma}+k\ln (\delta r_k)\\
&\leqslant \max_{t}\left( -\frac{t^2}{4\gamma}+t\ln (\delta r_k)\right)=\gamma \ln^2(\delta r_k), \quad k\geqslant k_0.
\end{align*}
Hence, $$\limsup_{k\rightarrow\infty}\frac{\ln M(r_k;f)}{\ln^2 r_k} \leqslant \limsup_{k\rightarrow\infty} \frac{\gamma\ln^2 (\delta r_k)}{\ln^2 r_k}=\gamma<\beta,$$contrary to \eqref{entpgf}. The contradiction shows that \begin{equation}\label{pinf}\liminf_{j\rightarrow\infty}\frac{\ln p_j}{j^2}\geqslant -1/(4\beta),\end{equation} whence  \begin{equation}\label{pj}p_j\geqslant \exp \{-j^2/(4\beta)+o(j^2)\},\quad j\rightarrow \infty.\end{equation}  Since the sequence $\{p_j\}$ is log-concave and not ultimately 0, it follows that all $p_j>0,$ and hence \eqref{pj} holds for all $j\in \mathbb{N}_0. $
This implies immediately that when $a>\exp\{-1/(2\beta)\},$ there holds
$$p_j\geqslant  Ca^{-j(j-1)/2}\quad \text{for some} \quad C>0 \quad\text{and all}\quad j\in \mathbb{N}_0.$$ By Theorem \ref{th1}, in this case the Stieltjes classes for the probability mass function of $Y=a^X$ exist. 

 Finally, it has to be pointed out that if $p_j=C\exp\{-j(j-1)/(4\beta)\},$ then Stieltjes classes exist by Theorem \ref{th1}, while for $p_j=C\exp\{-j(j+1)/(4\beta)\},$ there are no Stieltjes classes due to Theorem \ref{th2}. Notice that in both cases the distribution of $X$ is log-concave.
\end{proof}

\begin{remark} By juxtaposing \eqref{psup}  and \eqref{pinf}, it can be derived that, if $\{p_j\}$ is log-concafe and $f(z)$ satisfies \eqref{entpgf}, then $$\lim_{j\rightarrow\infty}\frac{\ln p_j}{j^2}= -\frac{1}{4\beta}.$$

\end{remark}

\begin{example} \label{logh} (Log-Heine distribution) Let $X\sim Heine(\lambda),$ that is, by \eqref{heine},
$$p_j=e_q(-\lambda)\frac{q^{j(j-1)/2}(\lambda(1-q))^j}{(q,q_j},\quad j\in\mathbb{N}_0.$$ Taking into account that $(q;q)_\infty<(q;q)_j<1,$ one obtains that 
$$C_1(qa)^{j(j-1)/2}[\lambda(1-q)]^j\leqslant \frac{p_j}{a^{-j(j-1)/2}}\leqslant C_2(qa)^{j(j-1)/2}[\lambda(1-q)]^j.$$ The last estimate implies that the Stieltjes classes for the log-Heine distribution exist if and only if either $a>1/q$ or $a=1/q$ and $\lambda(1-q)\geqslant 1.$
\end{example}

\section{Acknowledgements} The authors extend their appreciations to Professor Alexandre Eremenko (Purdue University, USA) for his valuable comments.


\begin{thebibliography}{99}


\bibitem{logconcave} M. An, Log-Concave Probability Distributions: Theory and Statistical Testing, \textit{SSRN Electronic Journal},  1--29
(1996)

\bibitem{berg} Ch. Berg, On some indeterminate moment problems for measures on a geometric progression, \textit{J. Comput. Appl. Math.}
\textbf{99} 67--75 (1998)
\bibitem{boer} V. S. Boicuk, A. E. Eremenko, The growth of entire functions that are representable by Dirichlet series (Russian), {\it Izv. Vys\v{s}. U\v{c}ebn. Zaved. Matematika} {\bf 5}(156),  93–-95 (1975)

\bibitem{char} Ch. A. Charalambides, \textit{Discrete $q$-Distributions},  Wiley, Hoboken, New Jersey, 2016.



\bibitem{recent} G. D. Lin, Recent developments on the moment problem, \textit{Journal of Statistical Distributions and Applications},  \textbf{4:5}, DOI: 10.1186/s40488-017-0059-2 (2017)

\bibitem{qarxiv} S. Ostrovska, M. Turan, $q$-Stieltjes classes for some families of
$q$-densities,  \textit{Statistics \& Probability Letters} \textbf{146}, 118--123 (2019)  

\bibitem{pakes} A. G. Pakes,
 Structure of Stieltjes classes of moment-equivalent probability laws. \textit{ J. Math. Anal. Appl.}  \textbf{326}(2), 1268--1290  (2007)


\bibitem{polya} G. Polya, G. Szeg\H{o}, Problems and Theorems in Analysis II, Springer-Verlag, (1998)

\bibitem{sw} A.   Saumard and  J. A. Wellner, Log-Concavity and Strong  Log-Concavity:  a review, arXiv:1404.5886v1, 23 Apr 2014.


\bibitem{stieltjes} T. J. Stieltjes.  Recherches sur les fractions continues. \textit{Annales de la Facult\'{e} des Sciences de Toulouse} \textbf{8},  J76-J122 (1894)

\bibitem{jap} J. Stoyanov.  Stieltjes classes for
moment-indeterminate probability distributions.  \textit{ J. Appl.
Probab.} \textbf{41A}, 281--294 (2004)

\bibitem{counter} J. Stoyanov,  Counterexamples in Probability, 3nd edn.  Dover Publications, New York, (2013)

\bibitem{titch} E.C. Titchmarsh, The theory of functions, Oxford University Press, New York, (1986)


\end{thebibliography}
\end{document}